\newtheorem{thm}{Theorem}
\newtheorem{prop}{Proposition}
\newtheorem{lem}{Lemma}
\newtheorem{cor}{Corollary}
\theoremstyle{remark}
\newtheorem{rem}{Remark}
\theoremstyle{definition}
\newtheorem{dfn}{Definition}
\newtheorem{exm}{Example}
\newcommand{\C}{\mathbb{C}}
\newcommand{\Q}{\mathbb{ Q}}
\newcommand{\R}{\mathbb{ R}}
\newcommand{\rk}{\operatorname{rk}}
\newcommand{\Z}{\mathbb{ Z}}
\DeclareMathOperator{\Ker}{Ker}
\DeclareMathOperator{\dg}{deg}
\title{On geometric formality of rationally elliptic manifolds in dimensions $6$ and $7$ }
\author{Svjetlana Terzi\'c}
\begin{document}

\maketitle

\begin{abstract}
We discuss the question of geometric formality for rationally elliptic manifolds of dimension $6$ and $7$. We prove that  a geometrically formal six-dimensional  biquotient  with $b_{2}=3$  has the  real cohomology of a   symmetric space. We also show that  a rationally hyperbolic  six-dimensional manifold with $b_2\leq 2$ and $b_3=0$ can not be geometrically formal. As it follows from  their real homotopy classification,  the  seven-dimensional geometrically formal rationally elliptic manifolds  have  the real  cohomology of  symmetric spaces as well.
\footnote{MSC 2000: 53C25, 53C30}
\end{abstract}

\tableofcontents

\section{Introduction}
The notion of geometric formality of a closed compact manifold $M$  is defined  by an  existence of a metric $g$ on $M$ such that the exterior product of  harmonic forms  are  again harmonic forms.  It is proved in~\cite{K} that a geometrically formal manifold of dimension $\leq 4$ has the real cohomology of a symmetric space. Afterwards this notion has been further studied and there were provided many examples of non -geometrically formal homogeneous spaces~\cite{KT}~\cite{KT1},\cite{GNO}, but also the examples of geometrically formal homogeneous spaces which are not homotopy symmetric spaces~\cite{KT1}. The notion  of geometric formality  has also been studied from the point of view  of its relation to the different  positive curvatures~\cite{B},~\cite{AZ}.  

 In this note we investigate the question of geometric formality of rationally elliptic manifolds in  small dimensions. The reason for considering rationally elliptic manifolds is that  a rationally hyperbolic manifold  has  many relations in its real cohomology algebra comparing  to the number of generators, which  very often may appear as an obstruction to geometric formality. In addition, the same estimation on the Betti numbers  that holds for the rationally elliptic manifolds~\cite{FHT} holds for  the geometrically formal manifolds as well~\cite{K}. 

In Section~\ref{five} and Section~\ref{seven} we show  that,  from the classification of the  rationally elliptic manifolds in dimensions five  and seven  it directly follows that in these dimensions any geometrically formal manifold has the real cohomology of a symmetric space. In Section~\ref{six}  we consider the  biquotients of dimension six for which $b_2=3$  and prove that any such geometrically formal biquotient has the real  cohomology algebra  of a symmetric space. We also show that a rationally hyperbolic six-dimensional manifold with $b_2\leq 2$ and $b_3=0$ can not be geometrically formal.

{\it Acknowledgment:} The author would like to thank  the referee  whose  remarks made the author significantly  clarify some places in the paper and  improve the exposition.

\section{Rationally elliptic manifolds and geometric formality}
\subsection{Notion of geometric formality}

Let $(M, g)$  be a  closed oriented Riemannian manifold and   $\Omega ^{*}(M)$  its de Rham algebra of differential forms. A differential form
$\omega \in \Omega ^{k}(M)$ is  said to be  harmonic if 
$
\Delta \omega = d\delta \omega + \delta d\omega  = (d+\delta )^2\omega =0,
$  
where $d$  is the  exterior derivative,  $\delta$ is  coderivative and  $\Delta$ is the  Laplace-de Rham operator.
To recall this in more detail,  let 
$[ , ] : \Omega_{x} ^{k}(M) \to \R$  be the  scalar product in the space of differential forms at $T_{x}M$  defined by:
\[
[\alpha _{x}, \beta _{x}] = \frac{1}{k!}\sum _{i_1, \ldots , i_k, j_1, \ldots , j_k}g^{i_1j_1}\cdots g^{i_kj_k}a_{i_1\ldots i_k}b_{j_1\ldots j_k},
\]
where $\alpha = \frac{1}{k!}\sum\limits _{i_1,\ldots , i_k}a_{i_1\ldots i_k}dx^{i_1}\wedge\cdots \wedge dx^{i_k}$ and $\beta = \frac{1}{k!}\sum\limits _{j_1, \ldots, j_k}b_{j_1\ldots j_k}dx^{j_1}\wedge \cdots \wedge dx^{j_k}$.

The  scalar product on the space $\Omega ^{k}(M)$ is defined by 
\[
\langle\alpha ,\beta \rangle=\int _{M} [\alpha _{x},\beta
_{x}]dvol_{g} \ .
\]

The  Hodge star operator
$\ast : \Omega ^{k}(M)\to \Omega ^{n-k}(M)$, $n=\dim M$,
is defined by 
\[
\alpha _{x} \wedge (\ast \beta)_{x} = [ \alpha _{x},\beta _{x}] dvol_{g_{x}}.
\]
Then for  $\alpha \in \Omega ^{k-1}$ and $\beta \in \Omega ^{k}$ it holds
$
\langle d\alpha ,\beta \rangle = (-1)^{k}\langle \alpha ,(\ast
^{-1}d\ast) \beta \rangle .
$
It implies that the operator
$\delta = (-1)^{k}\ast ^{-1}d\ast$ is adjoint to $d$ in
the space of $k$ - forms.

Denote by $\Upsilon (M,g)\subseteq \Omega ^{*}(M)$  the graded linear subspace of harmonic forms. It is well known
that any harmonic form is closed and no harmonic form is exact. In addition,  the
Hodge theorem states that any cohomology class $[\omega]\in H^{*}(M,\R )$ contains unique  harmonic representative. Thus,  there exists an  isomorphism 
between the graded vector spaces $ \Upsilon (M,g)$  and  $H^{*}(M,\R )$.

It naturally arises the question about the existence of the metric $g$ on $M$ such that    $\Upsilon (M,g)$ has an algebra structure   under  the exterior product $\wedge$.
For a such metric the algebras  $(\Upsilon (M,g) ,\wedge)$ and $(H^{*}(M,\R ),\wedge)$ are isomorphic.  
This   is defined in~\cite{K}:
\begin{dfn} 
A Riemannian metric $g$ on $M$ is said to be formal if the 
exterior product of its harmonic forms  are
harmonic forms.
\end{dfn}
\begin{dfn}
A closed Riemannian manifold $M$ is said to be geometrically formal if it admits
a formal Riemannian metric.
\end{dfn}

The following examples of geometrically formal manifolds are well known:
the  real cohomology spheres are geometrically formal since they have, up to constant, just one harmonic form;
the symmetric spaces $G/H$ are geometrically formal  for  an an invariant metric $g$. The second one follows  from the 
observations~\cite{DNF} that any $G$-invariant form on a symmetric space $G/H$ is closed and none is exact. In addition,  invariant forms  $\Omega ^{G}(G/H)$ form an algebra under  the exterior product. Since harmonic forms for an invariant metric $g$ are $G$- invariant,  it follows  that  $\Omega ^{G}(G/H)$ coincides  with  $ \Upsilon (G/H,g)$ and, thus,   $(\Upsilon (G/H,g), \wedge)$  is an algebra.

We found useful to note the following:

\begin{lem}\label{prod}
Assume that the manifold $M$ is not geometrically formal. Then the product metric $g = g_{M}\times g_{N}$ on  $M\times N$ can not be  formal for any closed manifold $N$ and any Riemannian  metrics $g_{M}$ on $M$ and $g_{N}$ on $N$.

\end{lem}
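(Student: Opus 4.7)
I would argue by contrapositive: assume that $g := g_M \times g_N$ is a formal Riemannian metric on $M \times N$ for some closed manifold $N$ and some metrics $g_M, g_N$, and deduce that $g_M$ itself is formal, contradicting the hypothesis on $M$.

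The key ingredient is that for a product Riemannian metric both $d$ and $\delta$ commute with the projection pullback $\pi_M^* \colon \Omega^*(M) \to \Omega^*(M \times N)$, and hence so does the Laplacian $\Delta$. Commutation with $d$ is automatic. For $\delta$ it follows from the standard factorization of the Hodge star on a Riemannian product,
\[
\ast_{g}(\pi_M^* \alpha \wedge \pi_N^* \beta) = \pm\, \pi_M^*(\ast_{g_M} \alpha) \wedge \pi_N^*(\ast_{g_N} \beta),
\]
applied with $\beta = 1$, together with $d(\pi_N^* dvol_{g_N}) = 0$. I would verify this identity in one short local computation. Because $\pi_M$ admits the section $m \mapsto (m, x_0)$ for any chosen $x_0 \in N$, the pullback $\pi_M^*$ is injective on forms. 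Combining these two observations, a form $\omega$ on $M$ is harmonic for $g_M$ if and only if $\pi_M^* \omega$ is harmonic for $g$.

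Now take any two harmonic forms $\alpha_1, \alpha_2 \in \Upsilon(M, g_M)$. Their pullbacks $\pi_M^* \alpha_1$ and $\pi_M^* \alpha_2$ are harmonic on $(M \times N, g)$ by the equivalence just established. By the assumed formality of $g$, the wedge product
\[
\pi_M^* \alpha_1 \wedge \pi_M^* \alpha_2 = \pi_M^*(\alpha_1 \wedge \alpha_2)
\]
is again harmonic on $M \times N$. Applying the equivalence in the other direction forces $\alpha_1 \wedge \alpha_2$ to be harmonic on $M$. Since $\alpha_1, \alpha_2$ were arbitrary, $g_M$ is formal, contradicting the assumption that $M$ is not geometrically formal.

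The only substantive point is the commutation $[\Delta_g, \pi_M^*] = 0$ for the product metric; I expect this to be routine via the Hodge star identity above, and it is the single step that actually uses the product structure. Once that equivalence is in place, the rest of the argument is one line.
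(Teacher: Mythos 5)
Your proposal is correct and follows essentially the same route as the paper: both hinge on the Hodge star factorization $\ast_g(\pi_M^*\alpha)=\pm\,\pi_M^*(\ast_{g_M}\alpha)\wedge\pi_N^*(dvol_{g_N})$ for the product metric, deduce that a form on $M$ is $g_M$-harmonic if and only if its pullback is $g$-harmonic, and then feed two harmonic forms on $M$ through the assumed formality of $g$ to contradict the hypothesis. Your packaging of the final step via $[\Delta_g,\pi_M^*]=0$ together with injectivity of $\pi_M^*$ is just a slightly more explicit version of the paper's closing remark that the restriction of $\alpha\wedge\beta$ to $M$ is the same form.
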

\begin{proof}
 Assume that  product metric $g$ on  $M\times N$  is a formal  metric for  some closed manifold $N$  and  some Riemannian metrics   $g_{M}$ on $M$ and $g_{N}$ on $N$. We claim that the metric $g_{M}$ is also formal. To see that let $\alpha$ be a  harmonic form on $M$ relative  to the metric $g_{M}$ and let $\ast _{M}$ be the  corresponding star operator. Then $\alpha$ is  a harmonic form on $M\times N$ relative  to the metric $g$. Namely, since $T_{x}M$ and $T_{x}N$  are orthogonal for the metric $g$  we have that $[ \beta _{(x,y)} , \alpha _{(x, y)} ]  = [ \beta _{(x,y)}^{M}, \alpha _{x}] $, where $\alpha _{(x,y)}= \alpha _{x}$ and $\beta _{(x,y)}^{M}$ is the restriction of the from $\beta _{(x, y)}$ on $T_{x}M\subset T_{(x,y)}(M\times N)$. More precisely, if $\beta _{ (x, y)} =    \frac{1}{k!}\sum\limits _{j_1, \ldots, j_k}\sum\limits_{s=0}^{k}b_{j_1\ldots j_k}(x,y)dx^{j_1}\wedge \cdots dx^{j_s} \wedge dy^{j_{s+1}}\wedge \cdots dy^{j_k}$, then $\beta _{(x,y)}^{M} = \frac{1}{k!}\sum\limits _{j_1, \ldots, j_k}b_{j_1\ldots j_k}(x,y)dx^{j_1}\wedge \cdots \wedge dx^{j_k}$.  Since $\beta _{(x,y)}\wedge (\ast _{M}\alpha) _{x} \wedge (vol_{N})_{y}  = \beta _{(x,y)}^{M}\wedge (\ast _{M}\alpha) _{x}\wedge (vol_{N})_{y}= [\beta _{(x,y)}^{M}, \alpha _{x}](vol_{M})_{x}(vol_{N})_{y} = [\beta _{(x,y)}, \alpha _{(x,y)}](vol_{M\times N})_{(x,y)} $ we obtain that on  $M\times N$ it holds   $\ast \alpha = \ast _{M}\alpha \wedge vol_{N}$ .  It further implies that  $ d(\ast \alpha) = d(\ast _{M}\alpha )\wedge vol_{N}  \pm \ast _{M}\alpha \wedge d(vol_{N}) = 0$, since obviously $d(vol_{N})=0$ and $d(\ast _{M}\alpha)=0$. Therefore if  $\alpha$ and $\beta$ are harmonic forms on $M$ then $\alpha \wedge \beta$ is   harmonic form on $M\times N$. The restriction of $\alpha \wedge \beta$ on   $M$ is the same form, so it  follows that $\alpha \wedge \beta$  is a harmonic form on $M$ and the metric $g_{M}$  is formal, what is the  contradiction.
\end{proof}

\begin{rem}\label{observ}
 Let us point out one useful observation. Assume that a manifold  $M$ is  geometrically formal
and  consider its cohomology ring $H^{*}(M, \R)$ with its  generators and relations.
Choose harmonic form in each generator for $H^{*}(M, \R)$. Then these harmonic forms satisfy the same relations as the corresponding generators in $H^{*}(M, \R)$.
In many cases the existence of such forms leads to the contradiction meaning that the  cohomology structure is often an obstruction to geometric formality.
\end{rem}

\subsubsection{Relation between  rational formality and geometric formality}
\begin{dfn}
A manifold $M$ is formal in the sense of rational homotopy theory if $\Omega ^{*} (M)$ is weakly equivalent to $H^{*}(M,\R )$:
\begin{equation}\label{form}
(\Omega ^{*} (M), d)\leftarrow (C, d)\rightarrow (H^{*}(M), d=0),
\end{equation}
where the both homomorphisms induce isomorphisms in cohomology.
\end{dfn}

The first well known examples of formal spaces are the manifolds having free cohomology algebras, then Kaehler manifolds,   
compact symmetric spaces, etc. Note that the first proof of formality of compact symmetric spaces is based on the fact we already recalled that  an  invariant metric   on a compact symmetric space is formal. Thus, in this case to prove formality one can take $(C, d) = (\Upsilon (G/H), 0)$ in~\eqref{form}, where $\Upsilon (G/H)$ is an algebra of harmonic forms for   an  invariant metric.

In addition  it is known : all homogeneous spaces $G/H$ with $\rk H=\rk G$ are formal~\cite{ON}, all closed simply connected manifolds of dimension $\leq 6$ are formal~\cite{MN},  all closed simply connected $7$-dimensional manifolds $M$  with $b_2(M)\leq 1$ are formal~\cite{FIM}.

\begin{rem}
 A geometrically formal manifold $M$ is formal: 
\[ (\Omega (M),d)\leftarrow (\Upsilon (M),d)\rightarrow (H^{*}(M), d=0).
\]
\end{rem}

The converse is not true. For example, it is proved in~\cite{KT} that the complete  flag manifolds $SU(n+1)/T^n$ are not geometrically formal, although they are formal since $\rk SU(n+1) = \rk T^n = n$. Moreover, none of the complete flag manifolds of a simple compact Lie group is geometrically formal, although they are all formal. This is proved in~\cite{KT} for the classical Lie groups and   $G_2$ and in~\cite{GNO}  for the exceptional Lie groups. For all these spaces their  cohomology ring structure is an obstruction for geometric formality. On the other hand,  in~\cite{KT1} are provided  the series of Stiefel manifolds for which it is proved to be geometrically formal and not homotopy equivalent to a symmetric space.






\subsection{ Rationally elliptic manifolds and geometric formality}
Let $X$ be  a simply connected topological space of finite type, that is $\dim H_{k}(X)< \infty$ for any $k$ . 
\begin{dfn}
$X$ is said to be  rationally elliptic if $\rk \pi _{x}(X) = \dim _{\Q} \pi _{*}(X)\otimes \Q $ is finite and it is said to be  rationally hyperbolic if  $\rk \pi _{k}(X) = \dim _{\Q} \pi _{*}(X)\otimes  \Q $ is infinite.
\end{dfn}

\begin{exm} 
The compact homogeneous spaces and  the biquotients of compact Lie groups are rationally elliptic spaces, see~\cite{FHT}.
\end{exm}

The  ranks of the homotopy groups of a rationally elliptic space  $X$,  $\dim X=n$ satisfy~\cite{FHT}:
\begin{equation}\label{ranks}
\sum _{k}2k\cdot \rk \pi _{2k}(X) \leq n,\;\;\; \sum _{k}(2k+1)\cdot \rk \pi _{2k+1}(X)\leq 2n-1 .
\end{equation}

We want to consider the  question of geometric formality, or  more precisely the weaker question of the real cohomology structure of geometrically formal manifolds,  for rationally elliptic spaces. Why to consider rationally elliptic spaces?

The first reason comes from the fact that the Betti numbers of a geometrically formal manifold $M$  satisfy~\cite{K}: 
\[
b_{i}(M)\leq b_{i}(T^{\dim M}), \; 1\leq i\leq dim M.
\]
It implies that 
\begin{equation}\label{bound}
\sum _{i=1}^{\dim M}b_{i}(M) \leq 2^{\dim M}.
 \end{equation}
On the other hand,   it is known~\cite{FHT} that   the  Betti numbers of  a rationally elliptic space $X$  satisfy  the inequality~\eqref{bound} as well.

The second  reason is that a  rationally hyperbolic space has  many relations in its real cohomology algebra comparing  to the number of generators.
Namely, 
let us recall~\cite{FHT} that a free algebra 
$(\wedge V, d)$ is said to be a  minimal model for a commutative differential graded algebra $(\mathcal{A},d_{\mathcal{A}})$
if $d(V)\subset \wedge ^{\geq 2}V$ and there exists a morphism
$
f : (\wedge V, d)\to (\mathcal{A}, d_{\mathcal{A}})
$,
which induces an isomorphism in cohomology. The minimal model  $\mu (X)$ of a simply connected topological space $X$ of a finite type is defined to be the minimal model of $A_{PL}(X)$. It is well known that $\mu (X)$ is unique up to isomorphism and  it classifies the rational homotopy type of $X$. Moreover,  the ranks of the  homotopy groups for  $X$  are given by the numbers of the generators of the corresponding degree  in the minimal model $\mu (X)$. 

For a rationally formal simply connected space $X$, the minimal model $\mu (X)$ coincides with the minimal model of  $(H^{*}(X, \Q ), d=0)$. Therefore,  the minimal model of a formal  simply connected  formal space   can be obtained from its cohomology algebra. One just starts,  see ~\cite{FHT},  with the cohomology generators of degree two and builds up the minimal model by adding the generators of higher degree to eliminate the cohomology relations, but  in the same time  keeping the  freeness of the minimal model.
Thus,  since    for  a rationally hyperbolic formal  space $X$,  $\mu (X)$  has infinite number of generators,  the number of relations in $H^{*}(X, \Q )$  is quite large comparing to  the number of generators in  $H^{*}(X, \Q)$.

Note that $\mu (X)$ and  $\mu (X)\otimes _{\Q} \R$ have the same number of generators and $\mu (X)\otimes _{\Q} \R$   is the minimal model for  $(H^{*}(X, \R), d=0)$ for  a formal $X$. It implies that the number of relations in  $H^{*}(X, \R )$ for a rationally hyperbolic formal space $X$  is quite large as well.  
Therefore, taking into account Remark~\ref{observ}, the  rationally hyperbolic formal  manifolds  are hardly to expect to admit a formal metric.

From the side of geometry,  it is conjectured by Gromov~\cite{G}  that  the estimation~\eqref{bound} holds for positively curved manifolds, while there is also conjecture by Bott~\cite{GH}  that a simply connected  manifold which admits a metric of non-negative sectional curvature is rationally elliptic.
This brought attention to the study of the connection between positive curvature and geometric formality. In that context the following results are known.  
\begin{itemize}
\item It is proved in~\cite{B} that for a simply connected compact oriented Riemannian $4$-manifold $M$  which is geometrically formal and has non-negative sectional curvature one of the following holds:
$M$ is homeomorphic to $S^4$,  $M$ is diffeomorphic to $\C P^2$ or 
 $M$ is isometric to $S^2\times S^2$ with product metric  where both factors carry metrics with positive curvature.
\item   A homogeneous geometrically formal metric  of positive curvature is either symmetric or a metric on a rationally homology sphere, see~\cite{AZ}.
\item The normal homogeneous metric on Alloff-Wallach spaces is not geometrically formal~\cite{KT1} , but it is not positively curved as well. It is  proved in ~\cite{AZ}  that no other homogeneous metric is geometrically formal as well.
\end{itemize}


\begin{rem}
 We further discuss the notion of geometric formality for  the rationally elliptic manifolds whose  dimension is $\geq 5$, because  of the more general  result of~\cite{K} which states  that  a  closed oriented geometrically formal manifold of dimension $\leq 4$  has the real cohomology algebra of a compact globally symmetric space.
\end{rem}
\subsubsection{Five-dimensional rationally elliptic manifolds}\label{five}

The following results are known:
\begin{itemize}
\item All five-dimensional simply connected rationally elliptic manifolds have the rational homotopy type of $S^5$ or $S^2\times S^3$ (~\cite{P},~\cite{T});
\item There are four diffeomorphism  types five-dimensional biquotients~\cite{Ba}:
\[
S^5,\;\;  S^2\times S^3,\;\;  X_{-1} = SU(3)/SO(3),\;\;  X_{\infty}.
\]
\end{itemize}
The manifolds $X_{-1}$ and $X_{\infty}$ are obtained by gluing two copies of non-trivial three dimensional disc  bundles over $S^2$ along the common boundary $\C P^2\# \overline{\C P^2}$. The Wu manifold $X_{-1}$ is  real cohomology sphere $S^5$, while $H^{*}(X_{\infty}, \R ) = H^{*}(S^2\times S^3, \R )$.

Thus, all geometrically formal five-dimensional simply connected rationally elliptic manifolds have the  real cohomology of a symmetric space. Among biquotients, $S^5, S^2\times S^3$ and $X_{-1}$ are geometrically formal, while for $X_{\infty}$ it is for us an  open question.

\section{Six-dimensional rationally elliptic manifolds}\label{six}

The second Betti number of a  six-dimensional rationally elliptic manifold is  by~\eqref{ranks} less than or equal $3$ .  The following results are known:

\begin{itemize}
\item All six-dimensional rationally elliptic manifold  with $b_2\leq 1$ have the real cohomology of $S^6$, $S^3\times S^3$, $S^2\times S^4$ and $\C P^3$  (~\cite{H},~\cite{TE}).
\item All six-dimensional rationally elliptic manifold with $b_2=2$ have the real homotopy type of $\C P^2\times S^2$, $SU(3)/T^2$ or $\C P ^{3}\# \C P^{3}$ (~\cite{H}).
\item All six dimensional rationally elliptic manifolds with $b_{2}=3$ have the rational homotopy groups of $S^2\times S^2\times S^2$ (~\cite{TE}),
\end{itemize}

The  first result on the  real cohomology structure of the   geometrically  formal  rationally elliptic six-manifolds for which $b_{2} \leq 2$  is as follows~\cite{TE}:
\begin{prop}
All geometrically formal six-dimensional rationally elliptic manifolds with $b_2\leq 2$ have the real cohomology of a symmetric space.
\end{prop}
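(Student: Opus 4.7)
The plan is to exhaust the classification case by case. For $b_2\leq 1$, the listed cohomology types $S^6$, $S^3\x S^3$, $S^2\x S^4$ and $\C P^3$ are all those of compact symmetric spaces, so there is nothing to prove in these cases. For $b_2=2$ the classification yields three possible real homotopy types. Since every simply connected closed $6$-manifold is formal~\cite{MN}, the real homotopy type determines the real cohomology algebra, so it suffices to analyze the three algebras. The first, that of $\C P^2\x S^2$, is already a product of two symmetric spaces, hence of symmetric-space type; the remaining cases are $SU(3)/T^2$ and $\C P^3\#\C P^3$, both of which I would rule out as real cohomology algebras of a geometrically formal manifold.

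For $\C P^3\#\C P^3$ the cohomology ring has degree-two generators $a,b$ with relations $ab=0$ and $a^3-b^3=0$, with $a^3$ a generator of the top $H^6$. Choosing harmonic representatives $\alpha,\beta$ and using Remark~\ref{observ} together with the fact that no harmonic form is nontrivially exact, one obtains the pointwise identities $\alpha\wedge\beta=0$ and $\alpha^3=\beta^3$ on all of $M$. Since $\alpha^3$ is a harmonic top form representing a nonzero class, it is a nonzero constant multiple of the volume form, hence nowhere zero; so $\alpha$ is pointwise nondegenerate. The pointwise Lefschetz isomorphism $L_\alpha\colon \Lambda^2 T_x^*M\to\Lambda^4 T_x^*M$ for a symplectic form on a six-dimensional vector space then forces $\beta\equiv 0$ from $\alpha\wedge\beta=0$, contradicting $[\beta]\neq 0$.

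For $SU(3)/T^2$, whose cohomology algebra is $\R[a,b]/(a^2+ab+b^2,\ a^2b+ab^2)$, I would invoke the obstruction constructed in~\cite{KT} for complete flag manifolds. That argument, as the present paper already stresses in the introduction, depends only on the structure of the real cohomology ring, and therefore rules out any closed manifold with this algebra as its real cohomology. If one prefers a self-contained treatment, harmonic representatives $\alpha,\beta$ satisfy $\alpha^3=\beta^3=0$ and $(\alpha-\beta)^3=-6\,\alpha^2\beta$ pointwise, so $\alpha-\beta$ is pointwise symplectic (because $[\alpha^2\beta]$ generates $H^6$), and one analyzes the Lefschetz decomposition of $\alpha$ and $\beta$ with respect to $\alpha-\beta$, combined with the action of the Hodge star on harmonic forms on a formal metric, to extract a contradiction.

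The main obstacle is precisely the $SU(3)/T^2$ case: the $\C P^3\#\C P^3$ argument is immediate from nondegeneracy of one harmonic top form plus linear Lefschetz, whereas for the flag manifold the algebra is richer, both relations are nontrivial, and the contradiction is more delicate, requiring either an appeal to~\cite{KT} or a careful pointwise analysis of the Lefschetz-type decomposition combined with the positivity $\|\alpha\|^2,\|\beta\|^2>0$ forced by the $L^2$-inner product on harmonic forms.
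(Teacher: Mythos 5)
Your overall structure is sound, but note first that the paper itself contains no proof of this Proposition: it is quoted from the author's earlier paper \cite{TE}, so there is no in-paper argument to compare against step by step. Your reduction is certainly the intended one: for $b_2\leq 1$ the classification already gives symmetric-space cohomology, and for $b_2=2$ one must show that no closed manifold whose real cohomology ring is that of $SU(3)/T^2$ or $\C P^3\#\C P^3$ can carry a formal metric (the appeal to \cite{MN} is unnecessary here, since the real cohomology ring is an invariant of the real homotopy type regardless of formality). Your $\C P^3\#\C P^3$ argument is complete and correct, and it is exactly in the spirit of the pointwise arguments the paper uses elsewhere: $\alpha\wedge\beta=0$ and $\alpha^3=\beta^3$ hold as forms, $\alpha^3$ is a nowhere-zero constant multiple of the volume form, so $\alpha$ is pointwise nondegenerate and linear hard Lefschetz ($L_\alpha\colon\Lambda^2\to\Lambda^4$ is an isomorphism in dimension six) forces $\beta\equiv 0$, contradicting $[\beta]\neq 0$.

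The soft spot is $SU(3)/T^2$. Citing \cite{KT} is legitimate, since (as this paper stresses) the obstruction there is purely the real cohomology ring, so it excludes any closed manifold with that ring, which is what the Proposition needs. But your ``self-contained'' alternative is only a plan: you never exhibit the contradiction that the Lefschetz decomposition with respect to $\alpha-\beta$ is supposed to produce. It can be closed with the same tool you used for $\C P^3\#\C P^3$. From the relations one gets $a^3=b^3=0$ while $a^2b\neq 0$ spans $H^6$; hence $\alpha^3=0$ pointwise, so $\Ker(\alpha)$ is at least two-dimensional at every point, while $\alpha^2\wedge\beta$ is a nowhere-zero volume form. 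Contracting the pointwise identity $\alpha^2+\alpha\wedge\beta+\beta^2=0$ with $0\neq v\in\Ker(\alpha)$ gives $(i_v\beta)\wedge(\alpha+2\beta)=0$; since $(a+2b)^3=-6\,a^2b\neq 0$, the form $\alpha+2\beta$ is pointwise nondegenerate, and hard Lefschetz on $1$-forms ($L^2\colon\Lambda^1\to\Lambda^5$ an isomorphism, hence $L\colon\Lambda^1\to\Lambda^3$ injective) yields $i_v\beta=0$. Then $v\in\Ker(\alpha)\cap\Ker(\beta)$, so $i_v(\alpha^2\wedge\beta)=0$, contradicting that $\alpha^2\wedge\beta$ is a volume form. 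With this completion (or with the appeal to \cite{KT}) your proof is correct.
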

\begin{cor}
The manifolds $SU(3)/T^2$ and $\C P^3\# \C P^3$ are not geometrically formal.
\end{cor}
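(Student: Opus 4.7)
The plan is to deduce the corollary from the preceding proposition. Both $SU(3)/T^2$ and $\C P^3\#\C P^3$ are simply connected six-dimensional rationally elliptic manifolds with $b_2=2$ and Betti numbers $(1,0,2,0,2,0,1)$, so by the proposition it suffices to show that the real cohomology ring of neither manifold is isomorphic to that of any compact six-dimensional symmetric space.

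I would first enumerate the candidate symmetric spaces. All simply connected irreducible compact symmetric spaces of dimension six (such as $S^6$, $\C P^3$, the complex quadric $Sp(2)/U(2)$, and the oriented real Grassmannian $SO(5)/SO(2)\x SO(3)$) have $b_2\leq 1$; hence a candidate with $b_2=2$ must be a product, and running through the partitions of $6$ into dimensions of simply connected compact symmetric factors leaves only $\C P^2\x S^2$, whose cohomology ring is
\[
H^{*}(\C P^2\x S^2,\R)\ =\ \R[s,c]/(s^2,c^3),\qquad \dg s=\dg c=2.
\]
A further check, using that the manifolds under consideration are orientable with $b_1=0$, rules out non-simply-connected symmetric spaces. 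For the two remaining rings, Borel's theorem together with the substitution $x_3=-x_1-x_2$ gives
\[
H^{*}(SU(3)/T^2,\R)\ =\ \R[x_1,x_2]/(x_1^2+x_1x_2+x_2^2,\ x_1^3),
\]
and the two collapsing maps $\C P^3\#\C P^3\ra\C P^3$ produce generators $u,v\in H^2$ with $uv=0$ (since classes pulled back from disjoint connect summands cup to zero) and $u^3=v^3$ in $H^6\cong\R$, giving
\[
H^{*}(\C P^3\#\C P^3,\R)\ =\ \R[u,v]/(uv,\ u^3-v^3).
\]

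Finally, I would distinguish the three rings by the cubic form $\kappa(\alpha)=\alpha^{3}$ on $H^{2}\cong\R^{2}$, viewed via Poincar\'e duality as a real-valued cubic that determines the full Poincar\'e duality algebra structure. A direct substitution gives, up to nonzero scalars,
\[
\kappa_{SU(3)/T^2}(a,b)=ab(a-b),\quad \kappa_{\C P^3\#\C P^3}(a,b)=a^3+b^3,\quad \kappa_{\C P^2\x S^2}(a,b)=ab^2.
\]
Over $\R$ these factor respectively as three distinct linear forms, a linear form times an irreducible quadratic, and a linear form times a squared linear form. Since the real factorization type of a binary cubic is invariant under the $\GL_{2}(\R)$-action on $H^{2}$ and under rescaling of $H^{6}$, the three rings are pairwise non-isomorphic; by the proposition, neither $SU(3)/T^2$ nor $\C P^3\#\C P^3$ is geometrically formal. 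The main obstacle is verifying that the classification of candidate six-dimensional compact symmetric spaces with the prescribed Betti numbers is exhaustive (in particular excluding all non-simply-connected ones); once this is settled, the cubic-form comparison is routine.
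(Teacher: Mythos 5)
Your proposal is correct and takes essentially the same route as the paper: the corollary appears there as an immediate consequence of the preceding proposition, exactly the deduction you make. Your additional work---identifying $\C P^2\times S^2$ as the only candidate symmetric space with $b_2=2$, and distinguishing the three rings by the real factorization type of the cubic form $\alpha\mapsto\alpha^3$ on $H^2$ (three distinct linear factors for $SU(3)/T^2$, linear times irreducible quadratic for $\C P^3\#\C P^3$, linear times squared linear for $\C P^2\times S^2$)---is a sound filling-in of the verification the paper leaves implicit, namely that neither manifold has the real cohomology of a symmetric space.
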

 We discuss  here the question of geometric formality for some simply-connected six-dimensional biquotients  for which  $b_2=3$.

Let us recall some  notions and results on general six-dimensional biquotients.  The biquotient $G/\!\!/H$  is said to be reduced if $G$ is simply-connected, $H$ is connected and no simple factor of $H$ acts transitively on any simple factor of $G$. By the result of Totaro~\cite{T1}  any compact simply-connected biquotient is diffeomorphic to reduced ones.  The biquotient is said to be decomposable if it can be obtained as the total space of $G_1/\!\!/H_{1}$  bundle over $G_{2}/\!\!/H_{2}$. It is proved~\cite{DV2} that a reduced compact simply connected six-dimensional  biquotient  $G/\!\!/H$  satisfies one of the following:  
\begin{enumerate}
\item  it  is diffeomorphic to a homogeneous space or Eschenburg inhomogeneous flag manifold $SU(3)/\!\!/T^2$;
\item  it  is decomposable;
\item it is diffeomorphic to $S^{5}\times _{T^2}S^3$ or $(S^{3})^3/\!\!/T^3$.
\end{enumerate}

The only irreducible homogeneous space of dimension  $6$ which does not have the cohomology of a symmetric space is $SU(3)/T^2$ and it is not geometrically formal. The   Eschenburg inhomogeneous flag manifold  $SU(3)/\!\!/T^2$ is neither geometrically formal  as it is proved in~\cite{KT}. 

We analyze now the following decomposable biquotients:   three $\C P^2$ bundles over $S^2$ and  infinitely many $S^2$ bundles with base a $4$-dimensional biquotient -  $\C P^2$, $S^2\times S^2$, $\C P^2\# \C P^2$, $\C P^2\# \overline {\C P^2}$.  Any bundle from the infinite families of the considered bundles has the second Betti number equal $3$.

\begin{lem}
All three $\C P^2$ bundles over $S^2$ have the real  cohomology of $\C P^2\times S^2$, that is of a symmetric space.
\end{lem}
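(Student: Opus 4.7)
I would identify each $\C P^2$-bundle over $S^2$ with the projectivization $\mathbb{P}(E)$ of a rank-three complex vector bundle $E\to S^2$, and then compute its real cohomology by Leray--Hirsch. Rank-three complex vector bundles over $S^2$ are classified by $\pi_1(U(3))\cong\Z$, but two such bundles yield isomorphic projectivizations iff they differ by tensoring with a line bundle, which reduces the classifying group to $\pi_1(PU(3))\cong\Z/3\Z$. I take the concrete representatives $E_k=\OO\oplus\OO\oplus\OO(k)$ for $k=0,1,2$ and set $M_k=\mathbb{P}(E_k)$; the case $k=0$ is $\C P^2\x S^2$ on the nose, so only $k=1,2$ require work.

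\textbf{Cohomology via Leray--Hirsch.} Let $s\in H^2(S^2,\R)$ be the positive generator and $\xi:=c_1(\OO_{M_k}(1))\in H^2(M_k,\R)$ the tautological hyperplane class of the projective bundle. Leray--Hirsch gives that $1,\xi,\xi^2$ freely generate $H^*(M_k,\R)$ as a module over $H^*(S^2,\R)$, and the ring structure is pinned down by the Grothendieck relation
\[
\xi^3-c_1(E_k)\xi^2+c_2(E_k)\xi-c_3(E_k)=0.
\]
Since $H^j(S^2)=0$ for $j\geq 3$, only $c_1$ survives, and $c_1(E_k)=ks$, yielding
\[
H^*(M_k,\R)\cong \R[\xi,s]/(s^2,\;\xi^3-ks\,\xi^2).
\]

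\textbf{Change of generator in $H^2$.} For $k\in\{1,2\}$ I would substitute $\eta:=\xi-\tfrac{k}{3}s$; using $s^2=0$ one computes
\[
\eta^3=\xi^3-3\cdot\tfrac{k}{3}s\,\xi^2=\xi^3-ks\,\xi^2=0,
\]
while $\{\eta,s\}$ still spans $H^2(M_k,\R)$. Hence
\[
H^*(M_k,\R)\cong \R[\eta,s]/(\eta^3,s^2)=H^*(\C P^2\x S^2,\R),
\]
which is the real cohomology of the symmetric space $\C P^2\x S^2$.

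The only subtle point is the passage to $\R$-coefficients: divisibility by $3$ is what legitimizes the substitution, and over $\Z$ the bundles $M_1,M_2$ are genuinely not cohomology equivalent to $\C P^2\x S^2$. Beyond that, the parametrization of $\C P^2$-bundles by $\Z/3\Z$, the Leray--Hirsch theorem, and the Grothendieck relation make the argument routine; the main thing to get right is simply the translation of the classification into an explicit relation in $H^*$.
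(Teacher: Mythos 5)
Your proposal is correct and follows essentially the same route as the paper: realize each bundle as the projectivization of a rank-three complex vector bundle over $S^2$, write the cohomology ring with the single surviving relation $\xi^3 - c_1 s\,\xi^2 = 0$ (the paper writes $y^3 + c_1 x y^2 = 0$, a sign convention difference only), and absorb the cross term by the substitution $\eta = \xi - \tfrac{k}{3}s$, which is legitimate precisely because one works over $\R$. The added material on the $\Z/3$ classification and the explicit representatives $\OO\oplus\OO\oplus\OO(k)$ is a harmless elaboration of what the paper takes for granted.
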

\begin{proof}
 Any $\C P^2$- bundle $E$  over $S^2$  is obtained  as the projectivisation of rank three complex vector bundle over $S^2$.  Therefore, the integral  cohomology of its total space $M$ is generated by two generators $x$ and $y$ of degree $2$ subject to the relations
\[
x^2 =0,\;\; y^3+c_1xy^2=0.
\]
If we put $y_1= y+\frac{c_1}{3}x$ then $x$ and $y_1$ generate the real  cohomology ring of $M$  and satisfy the relations $x^2=0$, $y_1^2\neq 0$, $y_1^3 =0$  and $xy_1^2 = y^2x\neq 0$. 
\end{proof}
Note that  the cohomology structure can not be obstruction for  geometric formality of any of these bundles. The trivial bundle $S^2\times \C P^2$ is geometrically formal, while   for the other two bundles we can remark  that  if some of them admits a formal metric it  admits  a symplectic structure as well.

It is proved in~\cite{KT} that any  of the infinitely many $S^2$ bundle over $\C P^2$   is geometrically formal if and only if it is a trivial bundle $S^2\times \C P^2$. Applying the same argument as it is done in~\cite{KT} for these family of bundles,  we prove the following:
\begin{thm}\label{jedan}
None of the infinitely many non-trivial $S^2$-bundles over $\C P^2\# \C P^2$
 is geometrically formal.
\end{thm}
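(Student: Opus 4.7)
My plan is to adapt the strategy used in~\cite{KT} for $S^{2}$-bundles over $\cp$. The core idea is to use geometric formality to obtain harmonic representatives of the cohomology generators that satisfy the ring relations as pointwise form identities, and then extract a contradiction from the rank structure of these forms together with a Lefschetz-type injectivity.

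First I would describe $H^{*}(M, \R)$ via Leray-Hirsch: the ring is generated by the pullbacks of the usual generators $a, b \in H^{2}(\cp\#\cp, \R)$ (satisfying $a^{2}=b^{2}$, $ab=0$, $a^{3}=b^{3}=0$) together with a fiberwise generator $x \in H^{2}(M, \R)$, subject to one additional relation
\[
x^{2} \;=\; (\lambda a + \mu b)\,x + \nu\, a^{2}
\]
with $\lambda, \mu, \nu\in\R$ read off from the characteristic classes of the bundle. After the shift $y = x-\tfrac{1}{2}(\lambda a+\mu b)$ this reduces to $y^{2}=c\,a^{2}$ with $c = \nu+\tfrac{1}{4}(\lambda^{2}+\mu^{2})$, and for the bundles in the paper's family non-triviality translates to $c\neq 0$.

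Next, assume $M$ carries a formal metric $g$ and, by Remark~\ref{observ}, let $\tilde a, \tilde b, \tilde y$ be harmonic representatives satisfying the above relations as identities of forms. Since $H^{6}(M,\R)\cong\R$, the harmonic $6$-form $\tilde a^{2}\tilde y$ is a nonzero constant multiple of the volume form; combined with $\tilde a^{3}=0$ this forces $\rk \tilde a_{p}=4$ and $\dim\ker\tilde a_{p}=2$ at every $p\in M$. The identity $\tilde y^{3}=c\,\tilde y\cdot\tilde a^{2}=c\,\tilde a^{2}\tilde y$ together with $c\neq 0$ then yields that $\tilde y^{3}$ is nowhere zero, so $\tilde y$ is non-degenerate of rank $6$ at every point.

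The contradiction comes out as follows. Pick any nonzero $v\in\ker\tilde a_{p}$; then $i_{v}\tilde a^{2}=2\tilde a\wedge i_{v}\tilde a=0$, and the identity $\tilde y^{2}=c\,\tilde a^{2}$ gives $i_{v}\tilde y^{2}=0$. On the other hand $i_{v}\tilde y^{2}=2(i_{v}\tilde y)\wedge\tilde y$, so $(i_{v}\tilde y)\wedge\tilde y=0$. Since $\tilde y$ is non-degenerate, the wedge map $\alpha\mapsto\alpha\wedge\tilde y$ from $\Lambda^{1}T_{p}^{*}M$ to $\Lambda^{3}T_{p}^{*}M$ is injective (standard symplectic linear algebra: if $\alpha\wedge\tilde y=0$ then $\alpha\wedge\tilde y^{2}=0$, but $\alpha=i_{w}\tilde y$ for a unique $w$ and $(i_{w}\tilde y)\wedge\tilde y^{2}=\tfrac{1}{3}i_{w}\tilde y^{3}\neq 0$ for $w\neq 0$), forcing $i_{v}\tilde y=0$, i.e.\ $v\in\ker\tilde y$, which contradicts the non-degeneracy of $\tilde y$. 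Hence $c=0$ and no non-trivial bundle can be geometrically formal. The only step I expect to need genuine verification is the identification of non-triviality in the paper's explicit biquotient family with the condition $c\neq 0$; this should be immediate from the biquotient description of these bundles.
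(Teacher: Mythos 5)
Your core harmonic-form argument is correct and is essentially the paper's own proof: the paper likewise passes to generators $x_1,x_2,z$ of $H^{*}(M,\R)$ with $z^2+dx_1^2=0$ (your $c$ is the paper's $-d$), takes a vector $v$ in the at-least-two-dimensional kernel of the harmonic representative $\omega_1$ of $x_1$, deduces $(i_{v}\eta)\eta=0$ from the relation, and, when $d\neq 0$, contradicts the fact that $\eta^{3}=-d\,\eta\,\omega_1^{2}$ is a volume form. Your detour through injectivity of wedging with a non-degenerate $2$-form is only a repackaging of the paper's direct computation $i_{v}(\eta^{3})=3(i_{v}\eta)\eta^{2}=0$.

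The genuine gap is precisely the step you deferred: the identification of ``non-trivial bundle'' with $c\neq 0$. This is not immediate, and it does not follow ``from the biquotient description'': the theorem concerns \emph{all} $S^2$-bundles over $\C P^2\# \C P^2$, not the biquotient family (that is the content of the later Corollary~\ref{biquot}). What is needed, and what the paper spends the second half of its proof establishing, is the implication: if $c=0$ then the bundle is trivial. This rests on integrality. Writing $c_{1}(E)=ax_1+bx_2$ and $c_{2}(E)=c'x_1^{2}$ with $a,b,c'\in\Z$, the vanishing of your constant reads $4c'=a^2+b^2$; reducing mod $4$ forces both $a$ and $b$ to be even, whence $w_{2}(V)=c_{1}(E)\bmod 2=0$ and $p_{1}(V)=c_{1}^{2}(E)-4c_{2}(E)=0$, and then the Pontryagin/Dold--Whitney classification of oriented $S^{2}$-bundles over a $4$-complex by the pair $(w_2,p_1)$ gives triviality. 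Your setup, with coefficients $\lambda,\mu,\nu\in\R$, has already discarded the integral structure on which this parity argument depends, so as written the step cannot be carried out at all: what you have actually proved is that bundles with $c\neq 0$ are not geometrically formal, leaving open the a priori possibility of a non-trivial bundle with $c=0$.
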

\begin{proof}
Let $M$ be the total  space of a    $S^2$- bundle  over $\C P^2\# \C P^2$.  Then  $M$ is the  unit sphere bundle in the associated rank $3$ vector bundle and it   is obtained by the projectivisation of rank $2$ complex vector bundle $E$.
Therefore the integral   cohomology of $M$  is given by $H^{*}(M) = H^{*}(\C P^2\# \C P^2, \Z)[y]$  subject to the relation
\begin{equation}\label{relsum}
y^2 + c_{1}(E)y + c_{2}(E) =0,
\end{equation}
where $c_1(E)$ and $c_{2}(E)$ are the pull backs of the first and second Chern classes from $H^{*}( (\C P^2\# \C P^2, \Z)$.
The cohomology ring  $H^{*}(\C P^2\# \C P^2, \Z)$ has two generators $x_1, x_2$ of degree $2$ satisfying relations $x_1^2=x_2^2$, $x_1x_2=0$ and $x_1^3=0$.  The relation~\eqref{relsum} writes as
\[
y^2 + (ax_1+bx_2)y + cx_1^2 =0\; \text{for}\; a,b,c\in \Z.
\]
Let $z= y+\frac{a}{2}x_1+\frac{b}{2}x_2$, then $z^2 = y^2 + (ax_1+bx_2)y + \frac{a^2+b^2}{4}x_1^2$. It follows that
\begin{equation}\label{relsumn}
z^2 + dx_1^2 =0,\; \text{where}\; d= c-\frac{a^2+b^2}{4}.
\end{equation}
and $x_1,x_2, z$ are the cohomology generators  for the real cohomology ring $H^{*}(M, \R)$.
We obtain that $z^2x_1=z^2x_2=0$ and $z^3 = -dzx_1^2=-dzx_2^2$, what implies  that $zx_1^2$ is top degree cohomology class. 
 
Assume that $M$ is geometrically formal. Let $\omega _{1}$ and $\eta$ be the harmonic representatives for  $x_1$ and $z$ respectively. Since $\omega _1^3=0$ it follows that the kernel foliation of $\omega _1$ is at least two-dimensional. Let $v_1, v_2$ be the independent  vectors of this foliation. From~\eqref{relsumn}
 it follows $i_{v_1}(\eta ^{2}) = 2(i_{v_1}\eta )\eta =0$.

 If $d\neq 0$ then $\eta ^3 = -d\eta \omega _1^2$ is a volume form on $M$. But, $i_{v_1}(\eta ^3) = 3(i_{v_1}\eta ) \eta ^2 =0$, what is the contradiction. 

If $d=0$ then $4c= a^2+b^2$, what implies that the integers $a$ and $b$ are even.  It further  implies that  $w_{2}(V) = c_{1}(V) \; (\text{mod}\; 2)  =0$ and  $p_{1}(V) = c_{1}^{2}(E) -4c_{2}(E) = (a^2+b^2-4c)x_1^2=0$. Therefore,   by~\cite{PO},~\cite{DW} the bundle  $M$ is trivial that is   $M= S^2\times (\C P^2\# \C P^2)$ .  The connected sum   $\C P^2\# \C P^2$ is not geometrically formal, since it is known not to admit  a symplectic structure. It follows by Lemma~\ref{prod}  that no product metric  on $M$ is formal.

\end{proof}

\begin{thm}\label{dva}
None of the infinitely many $S^2$-bundles over $S^2\times S^2$ which does not have the real cohomology of $(S^2)^3$ is geometrically formal.
\end{thm}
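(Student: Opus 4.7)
The plan is to mirror the strategy of \thmref{jedan}, exploiting the fact that for $S^2\times S^2$ both degree-$2$ generators already square to zero (not merely cube to zero). This strict vanishing is strong enough to let me find a single tangent vector annihilating the harmonic representatives of \emph{both} base classes at once, so no casework on $d$ is needed.

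First I would identify the cohomology ring. Any $S^2$-bundle over $S^2\times S^2$ is the projectivisation of a rank $2$ complex vector bundle $E$, so $H^*(M,\Z)=H^*(S^2\times S^2,\Z)[y]/(y^2+c_1(E)y+c_2(E))$. Writing $c_1(E)=ax_1+bx_2$, $c_2(E)=cx_1x_2$, and completing the square $z=y+\tfrac12(ax_1+bx_2)$, this relation becomes
\[
z^{2}=d\, x_{1}x_{2},\qquad d=\tfrac{ab}{2}-c,
\]
while $x_1^2=x_2^2=0$. If $d=0$ the ring coincides with $H^{*}((S^2)^3,\R)$, the case excluded by hypothesis, so I may assume $d\neq 0$; in particular $z^{3}=d\,zx_{1}x_{2}$ generates the top cohomology.

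Assume for contradiction that $M$ is geometrically formal and let $\omega_{1},\omega_{2},\eta$ be the harmonic representatives of $x_{1},x_{2},z$. Formality forces the cohomology relations on the level of forms:
\[
\omega_{1}\wedge\omega_{1}=0,\qquad \omega_{2}\wedge\omega_{2}=0,\qquad \eta\wedge\eta=d\,\omega_{1}\wedge\omega_{2},
\]
and $\eta^{3}$ is a nonzero harmonic $6$-form, hence a nowhere-vanishing constant multiple of the Riemannian volume form. The key pointwise step is a linear-algebra count: at each $p\in M$, the form $\omega_{1}|_{p}$ is a skew $2$-form on the $6$-dimensional space $T_pM$ with $\omega_{1}\wedge\omega_{1}=0$, so its rank is at most $2$ and $\ker\omega_{1}|_{p}$ has dimension at least $4$; the same holds for $\omega_{2}$. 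Hence
\[
\dim\bigl(\ker\omega_{1}|_{p}\cap\ker\omega_{2}|_{p}\bigr)\geq 4+4-6=2.
\]
Pick a nonzero $v$ in this intersection. Then $i_{v}\omega_{1}=i_{v}\omega_{2}=0$, so $i_{v}(\omega_{1}\wedge\omega_{2})=0$ and
\[
0=i_{v}(\eta^{2})=2(i_{v}\eta)\wedge\eta.
\]
Wedging once more with $\eta$ gives $(i_{v}\eta)\wedge\eta^{2}=0$, and therefore $i_{v}(\eta^{3})=3(i_{v}\eta)\wedge\eta^{2}=0$, contradicting the fact that $\eta^{3}$ is nowhere vanishing.

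The delicate input is the dimension count for $\ker\omega_{1}\cap\ker\omega_{2}$: it crucially relies on both $x_{i}^{2}$ vanishing identically, which forces both harmonic $2$-forms to have rank $\leq 2$ everywhere so that their kernels are forced to meet in dimension at least $2$ inside $T_pM$. This is exactly why the argument works cleanly over $S^{2}\times S^{2}$, whereas over $\C P^{2}\#\C P^{2}$ one has $x_{1}^{2}\neq 0$ and one is forced into the sharper contradiction of \thmref{jedan}, together with the separate treatment of the degenerate case where the base bundle becomes trivial.
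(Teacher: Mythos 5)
Your proof is correct and follows essentially the same route as the paper's: complete the square to get the relation $z^2+ (c-\tfrac{ab}{2})x_1x_2=0$, pass to harmonic representatives, use that the two degree-$2$ generators square to zero so their (at least $4$-dimensional) kernels meet in dimension at least $2$, and contract the relation to kill the top power of the representative of $z$, contradicting that it is a volume form when the coefficient is nonzero. The only cosmetic differences are that you discharge the excluded case $d=0$ at the outset and make the pointwise rank count explicit, whereas the paper phrases the same contradiction via $i_{v_1}i_{v_2}\omega^2=0$ and separately remarks on the $q=0$ (trivial-bundle) case.
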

\begin{proof} As previously, the bundle $M$  is obtained by the  projectivisation of rank $2$  complex vector bundle $E$. The integral   cohomology of $M$  is given by $H^{*}(M) = H^{*}(S^2\times S^2, \Z)[y]$   subject to the relation:
\begin{equation}\label{rels}
y^2 + (ax_1+bx_2)y + cx_1x_2 =0\; \text{for}\; a,b,c\in \Z, 
\end{equation}
where $x_1, x_2$  are the pull backs of the generators of the cohomology ring $H^{*}(S^2\times S^2, \Z)$ and they satisfy relations $x_1^2=x_2^2=0$. Let $z = y+\frac{a}{2}x_1+\frac{b}{2}x_2$. 
Then  $x_1,x_2$ and  $z$ represent  the generators for $H^{*}(M,  \R)$  and in terms of these generators the relation~\eqref{rels} writes as
\begin{equation}\label{fin}
z^2  + qx_1x_2=0,
\end{equation}
where $q= c -\frac{ab}{2}$.  Since $z^2x_1=z^2x_2=0$ we conclude that $x_1x_2z$ is non-zero top-degree cohomology class on $M$. 

Assume that $M$ is geometrically formal and let $\omega$, $\eta_1$ and $\eta _{2}$  be the harmonic representatives for $z$, $x_1$ and  $x_2$. We have that  $\eta _1 ^2=\eta _{2}^2=0$,    what implies that there exist linearly independent  vector fields  $v_1$ and $v_2$  in the  intersection of the kernel  foliations for $\eta _1$ and $\eta _2$. It   follows from~\eqref{fin} that $i_{v_1}i_{v_2}\omega ^2 =0$, so $\omega ^2\eta _1$ and $\omega ^2\eta _2$  can not be the volume forms on $M$. Thus,   the volume form must be  $\omega \eta_1 \eta_2$.

If $q\neq 0$ in~\eqref{fin} then   it is easy to see that $M$ does not  have the real cohomology of $S^2\times S^2\times S^2$. The assumption that $M$ is geometrically formal implies that $\omega  ^3 $ is a volume form on $M$ as well, what is in contradiction with the fact
$i_{v_1,v_2}\omega  ^2=0$.

If $q=0$ then $M$ has the real cohomology of $S^2\times S^2\times S^2$. In this case we have that   $ab=2c$, what implies that $p_{1}(V)=c_1^2(E)-4c_{2}(E) = (ax_1+bx_2)^2 -  4cx_1x_2 = 0$.  Note that if the both integers $a$ and $b$  are even then $w_{2}(V) = 0 $ what implies that this bundle  is trivial,  that is $M=S^2\times S^2\times S^2$, which is  geometrically formal symmetric space.

\end{proof}

\begin{thm}\label{tri}
None of the infinitely many $S^2$-bundles over   $\C P^2 \# \overline{\C P^2}$ which does not have the real  cohomology of $(S^2)^3$ 
 is geometrically formal.
\end{thm}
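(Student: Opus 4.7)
The plan is to imitate the proof of Theorem \ref{jedan}, with the intersection form of $\C P^2\#\overline{\C P^2}$ replacing that of $\C P^2\#\C P^2$. Let $x_1, x_2$ be the degree-$2$ generators of $H^*(\C P^2\#\overline{\C P^2}, \R)$, satisfying $x_1 x_2 = 0$ and $x_1^2 + x_2^2 = 0$, and let $E$ be the rank $2$ complex bundle whose projectivisation is $M$. Writing $c_1(E) = a x_1 + b x_2$ and $c_2(E) = c\, x_1^2$, the Leray--Hirsch relation $y^2 + (a x_1 + b x_2)y + c\, x_1^2 = 0$ becomes, after the shift $z = y + \tfrac{a}{2} x_1 + \tfrac{b}{2} x_2$,
\[
z^2 + d\, x_1^2 = 0, \qquad d := c - \tfrac{a^2 - b^2}{4},
\]
so that $\{x_1, x_2, z\}$ generates $H^*(M, \R)$, subject to the relations inherited from the base together with $z^2 + d\, x_1^2 = 0$.

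Next I would separate out the $(S^2)^3$-case. A direct check shows that $d = 0$ is precisely the condition under which $M$ has the real cohomology of $(S^2)^3$: the classes $x_1+x_2$, $x_1-x_2$, $z$ then all square to zero, and their triple product $2\, z\, x_1^2$ spans $H^6$. Conversely, when $d \neq 0$ the calculation $(\alpha x_1 + \beta x_2 + \gamma z)^2 = (\alpha^2 - \beta^2 - d\gamma^2)\, x_1^2 + 2\gamma(\alpha x_1 + \beta x_2) z$ shows that no three linearly independent degree-$2$ classes can all square to zero, so $(S^2)^3$-cohomology is impossible. By hypothesis of the theorem we may therefore assume $d \neq 0$.

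Now, following the pattern of Theorem \ref{jedan}, I would assume $M$ carries a formal metric and let $\omega_1, \omega_2, \eta$ be harmonic representatives of $x_1, x_2, z$. By Remark \ref{observ} they satisfy the same ring relations pointwise; in particular $\omega_1^3 = 0$, so $\ker\omega_1$ is at least $2$-dimensional at every point. Picking a nonzero $v \in \ker\omega_1$ and contracting $\eta^2 = -d\, \omega_1^2$ with $v$ yields $(i_v \eta)\wedge \eta = 0$, and consequently $i_v(\eta^3) = 3(i_v\eta)\wedge \eta^2 = 0$. On the other hand, $H^6(M, \R)$ is spanned by $z\, x_1^2$ (the projective bundle structure) and $z^3 = -d\, z\, x_1^2 \neq 0$, so $\eta^3$ is a nonzero harmonic top form, hence a nowhere vanishing multiple of the volume form, contradicting $i_v(\eta^3) = 0$. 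The only genuinely new point relative to Theorem \ref{jedan} is the sign in $x_1^2 + x_2^2 = 0$, which changes the formula for $d$ and, more importantly, enlarges the $(S^2)^3$-locus from the empty set (in the $\C P^2\#\C P^2$ case) to $\{d = 0\}$; this is the step where extra care is required.
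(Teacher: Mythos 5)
Your proposal is correct, and it does prove the stated theorem, but it takes a different reduction than the paper does. The paper's own proof of Theorem~\ref{tri} is essentially a one-line reduction to Theorem~\ref{dva}: since $\C P^2\#\overline{\C P^2}$ has the same real cohomology ring as $S^2\times S^2$ (pass to the isotropic classes $x_1\pm x_2$), the entire argument of Theorem~\ref{dva} applies verbatim — two square-zero harmonic generators $\eta_1,\eta_2$, the at least two-dimensional intersection of their kernel foliations, and the contradiction between $i_{v_1}i_{v_2}\omega^2=0$ and $\omega^3$ being a volume form when the cohomology is not that of $(S^2)^3$; the rest of the paper's proof is devoted to the excluded $(S^2)^3$ case ($4c=a^2-b^2$, $p_1(V)=0$, triviality of the bundle when $a,b$ are even, and Kotschick's result that $\C P^2\#\overline{\C P^2}$ admits no formal metric). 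You instead stay in the diagonal basis $x_1,x_2$ (with $x_1x_2=0$, $x_1^2+x_2^2=0$) and transplant the mechanism of Theorem~\ref{jedan}: a single kernel vector $v$ of $\omega_1$ (coming from $\omega_1^3=0$), the relation $\eta^2=-d\,\omega_1^2$ forcing $(i_v\eta)\wedge\eta=0$, and the contradiction with $\eta^3$ being a nowhere-zero top form when $d\neq 0$. Both routes are sound; the paper's buys brevity (no new computation at all, plus the side discussion of the trivial-bundle case), while yours is self-contained, needs only one kernel foliation instead of the intersection of two, and makes explicit the equivalence $d=0\iff M$ has the real cohomology of $(S^2)^3$ (recorded in the paper as $4c=a^2-b^2$), which is precisely what is needed to invoke the hypothesis — so no step is missing in your argument.
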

\begin{proof}
Let  $M$ be   the total  space of a    $S^2$- bundle over $\C P^2\# \overline{\C P^2}$. 
The real cohomology ring  for  $\C P^2\# \overline{\C P^2}$ is the same as for $S^2\times S^2$. Therefore, as in the proof of previous theorem, we conclude that if   $M$ does not have the real cohomology of $S^2\times S^2\times S^2$ then $M$ can not be geometrically formal. 

Let  $c_{1}(E) = ax_1+bx_2$ and $c_{2}(E) = cx_1^2$ are the the pullbacks of the first and the second Chern classes for $E$, where $x_1$ and $x_2$ are the generators for  $H^{*}(\C P^2\# \overline{\C P^2}, \Z)$. Then, as previously,  the real cohomology ring for $M$ is also generated by  $x_1, x_2$ and $z$ such that $z^2+dx_1^2=0$, where $d=c-\frac{a^2-b^2}{4}$. It implies that $M$ has the real cohomology of $(S^{2})^3$ if and only if $4c=a^2-b^2$. In this case $p_{1}(V) = 0$ and also the integers $a$ and $b$ are of the same parity. If the both $a$ and $b$ are even then $w_{2}(V)=0$ and the bundle $E$ is trivial, 
 that is    $M= S^2\times (\C P^2\# \overline{\C P^2})$. It is proved in~\cite{K1} that  $\C P^2\# \overline{\C P^2}$ admits no formal metric, what implies that no product metric on  $M$ is  formal.
\end{proof}

\begin{cor}\label{biquot}
None of the biquotients from the  infinite families of  the six-dimensional biquotients of the form $(SU(2))^{3}/\!\!/T^3$ different from $S^2\times S^2\times S^2$  is geometrically formal.
\end{cor}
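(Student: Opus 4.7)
My plan is to reduce the corollary to the three theorems just established, using the classification of six-dimensional biquotients from~\cite{DV2}. Any simply connected biquotient $M$ of the form $(SU(2))^3/\!\!/T^3$ which is not diffeomorphic to the symmetric space $S^2\times S^2\times S^2$ itself fits into the decomposable case of that classification, and in particular is diffeomorphic to an $S^2$-bundle over a four-dimensional biquotient of the form $(SU(2))^2/\!\!/T^2$. The only simply connected four-dimensional biquotients of that form are $S^2\times S^2$ and $\C P^2\#\overline{\C P^2}$, so each member of the infinite families under consideration is an $S^2$-bundle over one of these two bases.

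Once this reduction is in place, Theorem~\ref{dva} and Theorem~\ref{tri} do most of the work: if $M$ does not have the real cohomology of $S^2\times S^2\times S^2$, then $M$ is not geometrically formal. The remaining case is when $M$ does have the real cohomology of $(S^2)^3$ but is not diffeomorphic to $(S^2)^3$. From the proofs of Theorems~\ref{dva} and~\ref{tri}, this locus is cut out by the parameter condition $4c=a^2\pm b^2$ on the Chern classes of the associated rank $2$ complex vector bundle $E$, which in turn forces $p_1(V)=0$; if in addition $a$ and $b$ are both even, then $w_2(V)=0$ and the Dold--Whitney theorem (already used in the proof of Theorem~\ref{jedan}) gives that the underlying rank $3$ real bundle is trivial, so $M\cong S^2\times S^2\times S^2$.

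The final step, and the main obstacle, is to verify that when $M$ carries a $(SU(2))^3/\!\!/T^3$-biquotient structure, on the above locus the integers $a,b$ are automatically both even. I would attack this by writing down the explicit weight matrix of the $T^3$-action on $S^3\times S^3\times S^3$, using the second factor to realize $M$ as the projectivisation of a complex rank $2$ bundle $E$ over the four-dimensional biquotient base, and computing $c_1(E)\in H^2(\text{base};\Z)$ directly from the weights. The claim to be checked is that $c_1(E)\in 2H^2(\text{base};\Z)$ whenever the total space carries the real cohomology of $(S^2)^3$; once this parity is established, the previous paragraph identifies $M$ as the trivial bundle, contradicting the hypothesis that $M\neq S^2\times S^2\times S^2$ and completing the proof.
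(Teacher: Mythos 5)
Your reduction step is where the argument breaks. You claim that every biquotient of the form $(SU(2))^3/\!\!/T^3$ other than $(S^2)^3$ is diffeomorphic to an $S^2$-bundle over a four-dimensional biquotient, with base necessarily $S^2\times S^2$ or $\C P^2\#\overline{\C P^2}$. That list of bases is wrong: $\C P^2\#\C P^2$ is itself a biquotient of $(SU(2))^2$ by a torus (Totaro's realization of the Cheeger manifold), and indeed the classification in~\cite{DV2} -- which is what the paper's proof follows -- splits these biquotients into \emph{three} infinite families, one of which consists precisely of $S^2$-bundles over $\C P^2\#\C P^2$ (the family with cohomology relations $x_1^2+2x_1x_2=0$, $x_2^2+x_1x_2=0$, $x_3^2+c_1x_1x_3+c_2x_2x_3=0$). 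For that family one needs Theorem~\ref{jedan}, which your argument never invokes for non-formality (you cite it only for the Dold--Whitney triviality criterion). Moreover, for another of the three families (relations $x_1^2=0$, $x_2^2+bx_1x_2=0$, $x_3^2+c_1x_1x_3+c_2x_2x_3=0$) no bundle decomposition is established at all; the paper handles it by the cohomological non-formality result of~\cite{TE}, not by Theorem~\ref{dva} or Theorem~\ref{tri}. So your claimed reduction to those two theorems simply does not cover all the biquotients in the statement.

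The second gap is your final step. The entire residual case -- biquotients with the real cohomology of $(S^2)^3$ that are not diffeomorphic to $(S^2)^3$ -- rests on the parity claim that $c_1(E)\in 2H^2(\text{base};\Z)$ whenever the total space has the cohomology of $(S^2)^3$, and you explicitly leave this as "the claim to be checked," outlining but not performing the weight computation. As written this is a plan, not a proof, and it is left open at exactly the point you yourself identify as the main obstacle. The paper avoids this computation entirely: it invokes the fact that~\cite{DV2} also records which weight matrices yield diffeomorphic manifolds (hence which members of the families are $(S^2)^3$ itself), together with the already-published non-formality results of~\cite{KT1} for the first family and~\cite{TE} for the second. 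If you want a self-contained argument along your lines, you would have to actually carry out the weight-matrix computation for each of the three families, and you should be aware that Theorems~\ref{dva} and~\ref{tri} alone conclude triviality of the bundle only under the additional hypothesis that $a$ and $b$ are both even, which is automatic over $\C P^2\#\C P^2$ (since $4c=a^2+b^2$ forces it) but not over $S^2\times S^2$ or $\C P^2\#\overline{\C P^2}$.
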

\begin{proof}
 The six-dimensional biquotients of the form $(SU(2))^{3}/\!\!/T^3$  are classified in~\cite{DV2}. They are parametrized by  the  three families of infinite matrices and four sporadic matrices. In the same paper it is established which of these biquotients that correspond to these matrices are diffeomorphic. For the  biquotients considered in~\cite{T} which give one family of these biquotients and whose real cohomology ring has three generators $x_1,x_2, x_3$ subject to the  relations $x_1^2=0,\; ax_1x_2+x_2^2+x_2x_3=0,\; bx_1x_3+2x_2x_3+x_3^2=0$ , it is proved in~\cite{KT1} that they are not geometrically formal.  We want to remark that, as it is pointed in~\cite{DV2}, these biquotients are decomposable meaning that  any of them can be obtained as $S^2$ bundle over $S^2\times S^2$ or as  $S^2$ bundle over $\C P^2\# \overline{\C P^2}$, so Theorem~\ref{dva} and Theorem~\ref{tri} provide the new proof that they are not geometrically formal.  The other family of these biquotients has the  real cohomology ring generated by $x_1, x_2, x_3$ subject to the relations $x_1^2=0,\;x_2^2+bx_1x_2=0,\;x_3^2+c_1x_1x_3+c_2x_2x_x=0$. This family  is considered in~\cite{TE}, where it is proved that none  of these  biquotients which does not have the real cohomology of $(S^2)^{3}$ is  not geometrically formal.  The third family of the biquotients has the  real cohomology generators $x_1,x_2, x_3$ subject to the relations 
$x_1^2 + 2x_1x_2=0,\; x_2^2 +x_1x_2=0,\; x_3^2 + c_1x_1x_3 + c_2x_2x_3 =0$. For this family  it is proved in~\cite{DV2} that they are decomposable meaning that they can be represented as $S^2$ bundles over $\C P^2\# \C P^2$, so Theorem~\ref{jedan} proves that they are not geometrically formal. 
\end{proof}

\begin{rem}
Note that the biquotients from Corollary~\ref{biquot} belong to the third case in the description of six-dimensional biquotients that is given  in~\cite{DV2}.   This condition also describes the manifold $\C P^3\# \C P^3$.
\end{rem}

\subsection{On some hyperbolic six-dimensional manifolds}
We show that  none of  the  hyperbolic, closed, simply-connected six-dimensional  manifold for which   $b_{2}(M)\leq 2$ and $b_{3}(M)=0$ can be geometrically formal because of its cohomology structure. For such a  manifold it is known~\cite{H} that it is  rationally hyperbolic if and only if it has the real homotopy type of $(S^2\times S^4)\# \C P^3$ or $(S^2\times S^4)\# (S^2\times S^4)$. 

\begin{prop}
A  manifold having real cohomology structure of $(S^2\times S^4)\# \C P^3$ or $(S^2\times S^4)\# (S^2\times S^4)$  can not be geometrically formal.
\end{prop}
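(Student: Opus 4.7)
My plan is to handle the two cohomology types separately. In both cases I assume $M$ is geometrically formal, choose a harmonic representative for each cohomology generator, and invoke Remark~\ref{observ} so that these harmonic forms satisfy the cohomology relations pointwise.

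For $M$ with the real cohomology of $(S^{2}\x S^{4})\#\C P^{3}$, the key observation is that the degree-two generator $h$ coming from the $\C P^{3}$-summand satisfies $h^{3}=[M]$. Its harmonic representative $\sigma$ therefore satisfies $\sigma ^{3}=\Omega$ pointwise, where $\Omega$ is a nonzero constant multiple of the Riemannian volume form, so $\sigma_{x}$ is a linear symplectic form on $T_{x}M$ for every $x$. The connected-sum relation $ah=0$ gives $\alpha \wedge \sigma =0$ pointwise, and the hard Lefschetz isomorphism $L_{\sigma}\colon \wedge ^{2}T_{x}^{*}M\to \wedge ^{4}T_{x}^{*}M$ for six-dimensional symplectic vector spaces forces $\alpha \equiv 0$, contradicting $\alpha \wedge \beta =\sigma ^{3}\neq 0$.

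For $M$ with the real cohomology of $(S^{2}\x S^{4})\#(S^{2}\x S^{4})$, no element of $H^{2}$ has a nonzero cube, so the previous trick fails. I will instead construct a nowhere-vanishing $1$-form on $M$ from the cohomology structure and apply Poincar\'e--Hopf. Let $\alpha_{1},\alpha_{2}$ be harmonic representatives of the generators $a_{1},a_{2}\in H^{2}$. The relations $\alpha_{i}\wedge \alpha_{i}=0$ make each $\alpha_{i}(x)$ a decomposable $2$-form of rank at most two; the equalities $\alpha_{i}\wedge \beta_{i}=\Omega$ (with $\Omega$ nowhere zero) force both $\alpha_{1},\alpha_{2}$ to be nowhere zero and pointwise linearly independent, since $\alpha_{1}=c\alpha_{2}$ at a point would give $\alpha_{1}\wedge \beta_{1}=c\alpha_{2}\wedge \beta_{1}=0$ there. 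The relation $\alpha_{1}\wedge \alpha_{2}=0$ together with a standard linear-algebra fact about decomposable $2$-forms with vanishing wedge product then produces a common covector factor: $\alpha_{i}(x)=u(x)\wedge v_{i}(x)$ with $u(x)\in T_{x}^{*}M$ well-defined up to scale. Since $\langle u(x)\rangle$ equals the intersection of the rank-two sub-bundles $F_{i}:=\{w\in T^{*}M\mid w\wedge \alpha_{i}=0\}$, it defines a smooth real line sub-bundle $L\subset T^{*}M$.

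Since $M$ is simply connected we have $H^{1}(M;\Z /2)=0$, so $L$ is trivial and admits a nowhere-vanishing global section; via the metric this yields a nowhere-vanishing vector field on $M$, and Poincar\'e--Hopf forces $\chi (M)=0$. However $\chi (M)=b_{0}+b_{2}+b_{4}+b_{6}=1+2+2+1=6\neq 0$, a contradiction. The main technical step is the common-factor lemma: writing $\alpha_{i}=w_{i}\wedge w'_{i}$, the vanishing of $w_{1}\wedge w'_{1}\wedge w_{2}\wedge w'_{2}$ forces $\{w_{1},w'_{1},w_{2},w'_{2}\}$ to span at most three dimensions, and any two linearly independent elements of $\wedge ^{2}W$ with $\dim W=3$ share a common $1$-dimensional factor, since the two $2$-planes $\mathrm{span}(w_{i},w'_{i})$ must intersect in a line.
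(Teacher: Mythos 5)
Your proposal is correct, but in both cases it takes a genuinely different route from the paper's. For $(S^2\x S^4)\#\C P^3$ the paper works with the \emph{degenerate} generator: from $x^2=0$ the harmonic form $\alpha$ has a four-dimensional kernel distribution, the relation $\alpha\wedge\gamma=0$ forces $\gamma$ to vanish on pairs of kernel vectors, and contracting $\gamma^3$ with four independent kernel vectors gives $0$, contradicting that $\gamma^3$ is a volume form. You instead exploit the \emph{nondegenerate} generator: $\sigma^3$ being a volume form makes $\sigma$ pointwise symplectic, and the linear hard Lefschetz isomorphism $L_{\sigma}\colon\wedge^2T_x^*M\to\wedge^4T_x^*M$ converts $\alpha\wedge\sigma=0$ directly into $\alpha\equiv 0$; this is shorter and avoids the contraction computation. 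For $(S^2\x S^4)\#(S^2\x S^4)$ the paper argues pointwise throughout: the kernel distributions of $\alpha_1,\alpha_2$ intersect in dimension $2$ or $3$, and in each case evaluating the volume forms $\alpha_i\wedge\beta_i$ on a frame adapted to the kernels (with an auxiliary orthogonal vector field in the three-dimensional case) yields a contradiction. You globalize instead: decomposability, $\alpha_1\wedge\alpha_2=0$ and pointwise independence produce a common covector factor, hence a smooth line sub-bundle $L\subset T^*M$, and the contradiction is topological, $\chi(M)=6\neq 0$, via Poincar\'e--Hopf. This trades the paper's delicate case analysis for a single global obstruction, and your common-factor lemma and non-proportionality argument (using $\alpha_2\wedge\beta_1=0$ and $\alpha_1\wedge\beta_1$ nowhere zero) are sound.

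One caveat on the second case: you invoke simple connectivity to get $H^1(M;\Z/2)=0$ and trivialize $L$, but the proposition as stated assumes only the real cohomology structure, and the paper's proof never uses $\pi_1$. (Simple connectivity is assumed in the surrounding discussion, so your reading matches the intended application.) The dependence is easily removed: a closed manifold carrying a line field already has vanishing Euler characteristic --- pass to the double cover classified by $w_1(L)$, on which $L$ pulls back to a trivial sub-bundle of the cotangent bundle while the Euler characteristic is multiplied by $2$ --- so $\chi(M)=6$ still gives the contradiction without any hypothesis on the fundamental group.
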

\begin{proof}
The  manifold $(S^2\times S^4)\# \C P^3$  has three cohomology generators $x, y, z$, such that $\deg x=\deg z = 2$ and $\deg y=4$ and $x^2=0$, $xz=0$ and $yz=0$. 
If assume  that this manifold is geometrically formal,  we have harmonic forms $\alpha, \beta $ and $\gamma$ representing the classes $x, y,z$ respectively, which satisfy the same relations as these classes.  Since $\alpha ^{2} =0$ this form have four-dimensional kernel  foliation. Denote by $v_1, v_2, v_3, v_4$ linearly  independent vectors of this foliation. 
Since $\alpha \gamma =0$ we obtain that  $0=i_{v_i}i_{v_j}(\alpha \gamma ) = \alpha \gamma (v_i, v_j)$, what implies $\gamma (v_i, v_j) =0$. It further gives 
\[
 i_{v_4}i_{v_3}i_{v_2}i_{v_1}(\gamma ^{3}) = i_{v_4}i_{v_3}i_{v_2}(3i_{v_1}(\gamma)\gamma ^2) = 3i_{v_4}i_{v_3}(\gamma (v_1,v_2)\gamma ^{2}) -2 i_{v_1}(\gamma) i_{v_2}(\gamma)\gamma)=
\]
\[ -6i_{v_4}(\gamma(v_1,v_3) - i_{v_1}(\gamma)(\gamma(v_2,v_3)\gamma  - i_{v_2}(\gamma)i_{v_3}(\gamma)) = 
-6i_{v_4}(i_{v_1}\gamma i_{v_2}\gamma i_{v_3}\gamma )=0.
\]
This is in contradiction with the fact that $\gamma ^{3}$ is a volume form.

The manifold  $(S^2\times S^4)\# (S^2\times S^4)$ has four cohomology generators $x_1, x_2,y_1,y_2$ such that $\deg x_1=\deg x_2=2$ and $\deg y_1=\deg y_2=4$, which satisfy relations $x_1^2=x_2^2=0$, $x_1x_2=0$ and  $x_1y_2=x_2y_1=0$.  
 If this manifold is geometrically formal, we would have that the  harmonic forms $\alpha _{1}$ and $\alpha _{2}$, which represent  the cohomology classes $x_1$ and $x_2$,  satisfy $\alpha _{1}^2 =\alpha _{2}^2=0$. Therefore, the kernel foliations for $\alpha _{1}$ and $\alpha _{2}$   are  four-dimensional. We  denote their basis by $v_1,v_2,v_3,v_4$ and $u_1,u_2,u_3,u_4$ respectively.

 Let $\beta _{1}$ and $\beta _{2}$ be harmonic representatives for $y_1$ and $y_2$.   Since $\alpha _{1}\beta _{2} = 0$  and $\alpha _{2}\beta _{1}=0$ we obtain that $\beta _{2}(v_1,v_2,v_3,v_4)=0$ and $\beta _{1}(u_1, u_2,u_3,u_4)=0$. 

The intersection of the kernel foliations for $\alpha _{1}$ and $\alpha _{2}$ is at least two-dimensional.  Note that this  kernel foliations can  not coincide since  it gives contradiction with the fact that $\alpha _1\beta _{1}$ and $\alpha _{2}\beta _{2}$ are volume forms.
  
Assume that the kernel intersection is two-dimensional and let   $v_1 =u_1$ and $v_2=u_2$ be the basis  of this intersection.  Since $\alpha _{1}\alpha _{2}=0$,  we obtain that 
$0=i_{v_3,v_4}(\alpha _1\alpha _2) = \alpha _{2}(v_3,v_4)\alpha _{1}$ what gives $\alpha _{2}(v_3,v_4)=0$.  Therefore, $(\alpha _{2}\beta _{2})(u_1,u_2,u_3,u_4,v_3,v_4) = \alpha _{2}(v_3,v_4)\beta _{2}(u_1,u_2,u_3,u_4) = 0$, which is in  contradiction with the fact that $\alpha _{2}\beta _{2}$ is a volume form. 

If the kernel intersection is three dimensional, let $v_1=u_1, v_2=u_2$,  $u_3=v_3$ and denote by $v_4\in \Ker (\alpha _{1}), v_{4}\notin \Ker (\alpha _{2})$ and $u_4\in \Ker (\alpha _{1}), u_{4}\in \Ker (\alpha _{2}), u_{4}\notin \Ker (\alpha _{1})$. Then from $\alpha _{1}\beta _{2} = 0$  it follows that $i_{u_4}\alpha _{1}\beta _{2}+\alpha _{1}i_{u_4}\beta _{2}= 0$ and from   $\alpha _{1}\alpha _{2} = 0$ it follows that $i_{u_4}\alpha _1 i_{v_4}\alpha _2 = 0$. Further, there  exists  vector field $x$ orthogonal to the sum of these foliations $\Ker (\alpha _{1})\oplus \Ker (\alpha _{2})$. We obtain that   $\alpha _{1}(u_4, x)i_{v_4}\alpha _{2} -\alpha _{2}(v_4, x)i_{u_4}\alpha _{1}=0$. Note that $\alpha _{1}(u_4, x), \alpha _{2}(v_4, x)\neq 0$  since, say,   for $\alpha _{2}(v_4, x)=0$ we would have 
$\alpha _{2}\beta _{2}(u_1,u_2,u_3, u_4, v_4, x) = \alpha _{2}(v_4, x)\beta _{2}(u_1,u_2,u_3,u_4)=0$,  which is  in contradiction with $\alpha _{2}\beta _{2}$ being  volume form.  Therefore, $i_{u_4}\alpha _{1} = \frac{\alpha _{1}(u_4, x)}{\alpha _{2}(v_4, x)}i_{v_4}\alpha _{2}$ what, together with previous,  implies  $\frac{\alpha _{1}(u_4, x)}{\alpha _{2}(v_4, x)}i_{v_4}\alpha _{2}\beta _{2}+\alpha _{1}i_{u_4}\beta _{2} = 0$. Therefore, we obtain that $i_{v_4}\alpha _{2}i_{u_4}\beta _{2}=0$ contradicting that $\alpha _{2}\beta _{2}$ is a volume form.

\end{proof}

\section{Seven-dimensional rationally elliptic manifolds}\label{seven}

 It is proved in~\cite{H} that a closed simply-connected seven-dimensional manifold is rationally elliptic if and only if it has the real homotopy type of one of the following manifolds : $S^7$, $S^2\times S^5$, $\C P^2 \times S^3$, $S^3\times S^4$, $N^7$, $S^3\times (\C P^2\# \C P^2) $ or $ S^3\times (\C P^2\# \overline{\C P^2})$. Here 
the manifold $N^7$ is a homogeneous space $(SU(2))^3/T^2$, where the embedding $T^2\subset (SU(2))^3$ is given by 
\[
\left\{\begin{array}{ccc}
\left(
\begin{array}{cc}
z & 0\\
0 & z^{-1}
\end{array}\right), &
\left(
\begin{array}{cc}
w & 0\\
0 & w^{-1}
\end{array}\right), &
\left(
\begin{array}{cc}
zw & 0\\
0 & (zw)^{-1}
\end{array}\right)
\end{array}\right\}.
\]

The manifolds $S^7$, $S^2\times S^5$, $\C P^2 \times S^3$ and  $S^3\times S^4$ are obviously geometrically formal. On the other side,  not all manifolds having the real homotopy types  of  these manifolds are geometrically formal. The Alloff-Wallach spaces $SU(3)/T^1$  have the real  cohomology of $S^2\times S^5$, but  the normal homogeneous metrics on these spaces  are  not formal~\cite{KT1}. This result, as we already mentioned,  is recently strengthened in~\cite{AZ}, where it is proved that   none of the homogeneous metrics on Alloff-Wallach spaces can be geometrically formal.

The real cohomology algebra for $N^7$  is as follows:
\[
\R [x_1,x_2]\otimes \wedge (y_1,y_2,y_3), dx_1=dx_2=0, dy_1= x_1^2, dy_2=x_2^2, dy_3=(x_1+x_2)^2,
\]
where $\dg x_1=\dg x_2=2$.  It follows that $N^7$ is not Cartan pair homogeneous space and, thus, not  formal in the sense of rational homotopy theory~\cite{ON}. Therefore,  it can not be geometrically formal.

The product metric on any of manifolds  $S^3\times (\C P^2\# \C P^2) $ and  $ S^3\times (\C P^2\# \overline{\C P^2})$  can not be formal  since, otherwise, it would by Lemma~\ref{prod} imply that the connected sums
$\C P^2\# \C P^2$  and $\C P^2\# \overline{\C P^2}$ are geometrically formal  manifolds which is, as we already noted,  not the case.
 
\vspace*{0.2cm}



\bibliographystyle{amsplain}

Svjetlana Terzi\'c \\
Faculty of Science, University of Montenegro\\
D\v zord\v za Va\v singtona bb, 81000 Podgorica, Montenegro\\
E-mail: sterzic@ac.me

\end{document}